\documentclass[a4paper,11pt]{article}
\usepackage{graphicx}
\usepackage{epstopdf}
\usepackage{amsthm}
\usepackage[utf8]{inputenc}
\usepackage[english]{babel}
\newtheorem{theorem}{Theorem}[section]
\newtheorem{corollary}{Corollary}[theorem]

\theoremstyle{definition}

\usepackage{amssymb, amsmath}
\topmargin -0.3cm 
\evensidemargin 0.5cm 
\oddsidemargin 0.5cm
\textheight 8.8in 
\textwidth 6.0in
\title{Integral operator on certain subclass of analytic function with negative coefficients}
\date{}
\begin{document}
\numberwithin{equation}{section}
\maketitle
\date{}
\begin{center}
\author{{\bf{G. M. Birajdar}}\vspace{.31cm}\\
School of Mathematics \& Statistics,\\
Dr. Vishwanath Karad MIT World Peace University,\\
 Pune (M.S) India 411038\\
Email: gmbirajdar28@gmail.com}\vspace{0.31cm}\\
\author{{\bf{N. D. Sangle}}\vspace{0.31cm}\\
Department of Mathematics,\\
D. Y. Patil College of Engineering \& Technology, Kolhapur\\
 (M.S.) India 416006\\
Email: navneet\_sangle@rediffmail.com}\vspace{.5cm}\\

\end{center}
\vspace{1cm}
\abstract{}
In this paper, we study subclass of analytic function with negative coefficient defined by integral operator in the unit disc $U = \left\{ {z \in C:\left| z \right| < 1} \right\}$. The results are included coefficient estimates, closure theorem and distortion theorems of functions belonging to this subclass. Also, we presented detailed study of uniformly convex and uniformly starlike functions. \\

{\bf{2000 Mathematics Subject Classification:}} 30C45 , 30C50 \\

{\bf{Keywords:}} Analytic, Integral operator, Univalent, Convex set, Cauchy-Schwarz inequality.\\

\section{Introduction} 
\renewenvironment{proof}{{\bfseries Proof:}}{}
Let $A_j$ denote the class of functions of the form 
\begin{equation}
f(z) = z + \sum\limits_{k = j + 1}^\infty  {{a_k}{z^k}\,\,\,\left( {j \in N = \left\{ {1,2,3,...} \right\}} \right)}
\end{equation}
which are analytic in the unit disc $U = \left\{ {z \in C:\left| z \right| < 1} \right\}$.\\
The integral operator $I^{n}$ is defined in [1] by \\
\begin{align*}
{I^0}\,f(z) = f(z).
\end{align*}
\begin{align*}
{I^1}\,f(z) = I(z)=\int\limits_0^z {f(t){t^{ - 1}}dt;}
\end{align*}
\begin{align*}
{I^n}f(z) = I\left( {{I^{n - 1}}f(z)} \right),\,n \in N = \left\{ {1,2,3,...} \right\} 
\end{align*}
Integral operator for $f(z)$ is defined as:
\begin{align}
{I^n}f(z) = z + \sum\limits_{k = 2}^\infty  {{k^{ - n}}} {a_k}{z^k}
\end{align}
 
Using above operator ${I^n}$, we say that a function $f(z)$ belongs to
${A_j}$  is in  $S\left( {n,m,\beta} \right)$ if and only if 
\begin{align*}
{\mathop{\rm Re}\nolimits} \left\{ {\frac{{{I^{n + m}}f(z)}}{{{I^n}f(z)}}} \right\} \ge \beta \left| {\frac{{{I^{n + m}}f(z)}}{{{I^n}f(z)}} - 1} \right|
\end{align*}
for some $\beta \ge0$ and for all $z \in U$.\\
Let $T_j$ denote the subclass of $A_j$ consisting of functions of the form
\begin{equation}
f(z) = z - \sum\limits_{k = j + 1}^\infty  {{a_k}{z^k}\,\,\,\left(a_k\ge0  , {j \in N = \left\{ {1,2,3,...} \right\}} \right)}
\end{equation}
We define $T(n,m,\beta) = S(n,m,\beta) \cap {T_j}$.\\
 
The class of analytic function with negative coefficients have been studied by various researchers (\cite{B2},\cite{B3},\cite{B4},\cite{B5},\cite{B7},\cite{B8},\cite{B10}) and among are Robertson \cite{B6},  Sangle and Birajdar \cite{ B12}, few to mention.
\section{Main Results} 
In this section, we present some important results for the class.
\begin{theorem}
Let the function $f(z)$ be defined by (1.3) then  $f(z)$ belongs to $T(n,m,\beta)$ if and only if
\begin{equation}
\sum\limits_{k = j + 1}^\infty  {{{\left[ k \right]}^{ - n}}\left[ {\left( {\beta  + 1} \right){{\left[ k \right]}^{ - m}} - \beta } \right]} \,{a_k} \le 1.
\end{equation} 
The  result is sharp.
\end{theorem}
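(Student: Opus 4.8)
The plan is to establish the equivalence in (2.1) by proving the two implications separately and then to confirm sharpness with an extremal function. Throughout I set $w=\dfrac{I^{n+m}f(z)}{I^{n}f(z)}$ and use the expansions $I^{n}f(z)=z-\sum_{k=j+1}^{\infty}k^{-n}a_{k}z^{k}$ and $I^{n+m}f(z)=z-\sum_{k=j+1}^{\infty}k^{-(n+m)}a_{k}z^{k}$ coming from (1.2), which give
\[
w-1=\frac{\sum_{k=j+1}^{\infty}\left(k^{-n}-k^{-(n+m)}\right)a_{k}z^{k}}{\,z-\sum_{k=j+1}^{\infty}k^{-n}a_{k}z^{k}\,}.
\]
Membership of $f$ in $T(n,m,\beta)$ is precisely the requirement $\operatorname{Re}(w)\ge\beta\,|w-1|$ for every $z\in U$, so the whole proof is about translating this analytic condition into the single coefficient estimate (2.1).

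For the sufficiency ($\Leftarrow$) I would assume (2.1) and fix $z\in U$. Starting from the elementary inequality $\operatorname{Re}(w)\ge 1-|w-1|$ one has $\operatorname{Re}(w)-\beta|w-1|\ge 1-(1+\beta)|w-1|$, so it is enough to prove $(1+\beta)|w-1|\le 1$. I would bound the quotient above by the triangle inequality, majorising the numerator by $\sum\left(k^{-n}-k^{-(n+m)}\right)a_{k}|z|^{k}$ and minorising the denominator by $|z|-\sum k^{-n}a_{k}|z|^{k}$; after cancelling one factor of $|z|$ and using $|z|^{k-1}\le 1$, the inequality $(1+\beta)|w-1|\le 1$ should reduce exactly to the coefficient sum in (2.1). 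The only delicate point is keeping the denominator away from zero, which needs $\sum k^{-n}a_{k}<1$; this holds as soon as each bracketed coefficient in (2.1) is at least $k^{-n}$, a reduction worth verifying at the outset.

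For the necessity ($\Rightarrow$) I would assume $f\in T(n,m,\beta)$ and use that $\operatorname{Re}(w)\ge\beta|w-1|$ holds for all $z\in U$. Since every $a_{k}\ge 0$, letting $z=r\to 1^{-}$ through real values makes $I^{n}f(r)$ and $I^{n+m}f(r)$ real and positive, so the modulus $|w-1|$ can be replaced by a signed expression and, after clearing the positive denominator and passing to the limit, one recovers a coefficient inequality. I expect this direction to be the main obstacle: on the positive real axis $w>1$, so for small $\beta$ the inequality is slack there and the genuinely binding estimate comes from another direction. The clean remedy is to rewrite the defining condition in the equivalent linearised form $\operatorname{Re}\{(1-\beta e^{i\theta})w+\beta e^{i\theta}\}\ge 0$, valid for every real $\theta$, and then to choose $\theta$ together with $\arg z$ so that the resulting series becomes real and extremal before letting $|z|\to 1^{-}$; this is the step I would spend the most care on, since it is where the precise shape of the bracketed coefficient in (2.1) is pinned down.

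Finally, sharpness follows by testing the single-term functions $f(z)=z-\dfrac{z^{k}}{k^{-n}\left[(\beta+1)k^{-m}-\beta\right]}$, for which (2.1) holds with equality, so the constant $1$ on the right of (2.1) cannot be lowered; the choice $k=j+1$ gives the extremal function of least degree in the class.
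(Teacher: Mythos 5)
Your overall strategy (triangle-inequality bound for sufficiency, radial limit for necessity, single-term extremal functions for sharpness) is the same one the paper uses, but the central computational claim of your sufficiency step is false, and that is where the argument breaks. With $I^{n}f(z)=z-\sum k^{-n}a_{k}z^{k}$ and $I^{n+m}f(z)=z-\sum k^{-n-m}a_{k}z^{k}$ one has
\[
w-1=\frac{\sum_{k=j+1}^{\infty}k^{-n}\left(1-k^{-m}\right)a_{k}z^{k-1}}{1-\sum_{k=j+1}^{\infty}k^{-n}a_{k}z^{k-1}},
\]
so your reduction of $(1+\beta)|w-1|\le 1$ by the triangle inequality lands on
\[
\sum_{k=j+1}^{\infty}k^{-n}\bigl[(2+\beta)-(\beta+1)k^{-m}\bigr]a_{k}\le 1,
\]
not on (2.1): since $k^{-m}\le 1$, the bracket here is at least $1$ while the bracket in (2.1) is at most $1$, so the condition you actually need is strictly stronger than the one you are allowed to assume. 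Your proposed fix for the denominator also fails: it requires $(\beta+1)k^{-m}-\beta\ge 1$, which is equivalent to $k^{-m}\ge 1$ and is false for every $k\ge 2$, $m\ge 1$ (for $\beta>0$ the bracket is even negative for large $k$). The gap cannot be repaired inside your framework because the implication (2.1) $\Rightarrow f\in T(n,m,\beta)$ is itself false: for $n=0$, $m=1$, $\beta=1/2$ the function $f(z)=z-4z^{2}$ satisfies (2.1) with equality, yet $I^{0}f$ vanishes at $z=1/4$ and at $z=0.3$ one computes $w=0.12/(-0.06)=-2$, so $\operatorname{Re}(w)<0\le\beta|w-1|$ fails. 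Your computation would close exactly for the Salagean differential operator $z+\sum k^{n}a_{k}z^{k}$ with positive powers, where $(1+\beta)(k^{m}-1)+1=(1+\beta)k^{m}-\beta$; the bracket in (2.1) appears to have been transcribed from that setting without adjustment.

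On the necessity direction you are over-cautious: assuming $f\in T(n,m,\beta)$ forces $I^{n}f\ne 0$ on $(0,1)$, the inequality $k^{-n}\ge k^{-n-m}$ gives $w\ge 1$ there, and letting $r\to 1^{-}$ yields $\sum k^{-n}\bigl[(1-\beta)k^{-m}+\beta\bigr]a_{k}\le 1$, whose bracket dominates the one in (2.1); hence (2.1) follows and no linearisation in $\theta$ is needed. This radial limit is exactly what the paper does, except that the paper records the modulus of the numerator with the wrong sign (as $k^{-n-m}-k^{-n}$), which is precisely how it arrives at the bracket $(\beta+1)k^{-m}-\beta$ instead of the correct $(1-\beta)k^{-m}+\beta$, and its converse direction is asserted rather than computed. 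So your proposal is more careful than the paper's own proof, but the step you flagged as the one that \emph{should reduce exactly to (2.1)} is the step that cannot be made to work.
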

\begin{proof}
Assume that $f(z) \in T(n,m,\beta)$, then by definition 
\begin{align*}
{\mathop{\rm Re}\nolimits} \left\{ {\frac{{{I^{n + m}}f(z)}}{{{I^n}f(z)}}} \right\} \ge \beta \left| {\frac{{{I^{n + m}}f(z)}}{{{I^n}f(z)}} - 1} \right|, \quad z \in U.
\end{align*}
Equivalently,
\begin{align*}
{\mathop{\rm Re}\nolimits} \left\{ {\frac{{1 - \sum\limits_{k = j + 1}^\infty  {{{\left[ k \right]}^{ - n - m}}} \,{a_k}{z^{k - 1}}}}{{1 - \sum\limits_{k = j + 1}^\infty  {{{\left[ k \right]}^{ - n}}} \,{a_k}{z^{k - 1}}}}} \right\} \ge \beta \left| {\frac{{1 - \sum\limits_{k = j + 1}^\infty  {{{\left[ k \right]}^{ - n - m}}} \,{a_k}{z^{k - 1}}}}{{1 - \sum\limits_{k = j + 1}^\infty  {{{\left[ k \right]}^{ - n}}} \,{a_k}{z^{k - 1}}}} - 1} \right|
\end{align*}
\begin{align}
=\left| {\frac{{1 - \sum\limits_{k = j + 1}^\infty  {{{\left[ k \right]}^{ - n - m}}} \,{a_k}{z^{k - 1}} - \sum\limits_{k = j + 1}^\infty  {{{\left[ k \right]}^{ - n}}} \,{a_k}{z^{k - 1}}}}{{1 - \sum\limits_{k = j + 1}^\infty  {{{\left[ k \right]}^{ - n}}} \,{a_k}{z^{k - 1}}}}} \right|
\end{align}
Choosing value of $z$ on real axis so that left side of (2.2) is real and letting 
$z \to 1$, we get
\begin{align*}
\left[ {1 - \sum\limits_{k = j + 1}^\infty  {{{\left[ k \right]}^{ - n - m}}} \,{a_k}} \right] \ge \beta \sum\limits_{k = j + 1}^\infty  {\left[ {{{\left[ k \right]}^{ - n - m}} - {{\left[ k \right]}^{ - n}}} \right]} 
\end{align*}
which yields, 
\begin{align*}
\sum\limits_{k = j + 1}^\infty  {{{\left[ k \right]}^{ - n}}} \,\left[ {\left( {\beta  + 1} \right){{\left[ k \right]}^{ - n}} - \beta } \right]{a_k} \le 1.
\end{align*}
Conversely, suppose that (2.1) is true for $z \in U$, then
\begin{align*}
{\mathop{\rm Re}\nolimits} \left\{ {\frac{{{I^{n + m}}f(z)}}{{{I^n}f(z)}}} \right\} - \beta \left| {\frac{{{I^{n + m}}f(z)}}{{{I^n}f(z)}} - 1} \right| \ge 0
\end{align*}
\begin{align*}
\left[ {1 - \sum\limits_{k = j + 1}^\infty  {{{\left[ k \right]}^{ - n - m}}} \,{a_k}} \right] \ge \beta \sum\limits_{k = j + 1}^\infty  {\left[ {{{\left[ k \right]}^{ - n - m}} - {{\left[ k \right]}^{ - n}}} \right]{a_k}} \,.
\end{align*}
If
\begin{align*}
\left\{ {\frac{{1 - \sum\limits_{k = j + 1}^\infty  {{{\left[ k \right]}^{ - n - m}}} \,{a_k}{{\left| z \right|}^{k - 1}}}}{{1 - \sum\limits_{k = j + 1}^\infty  {{{\left[ k \right]}^{ - n}}} \,{a_k}{{\left| z \right|}^{k - 1}}}}} \right\} - \beta \left\{ {\frac{{\sum\limits_{k = j + 1}^\infty  {{{\left[ k \right]}^{ - n}}} \left[ {{{\left[ k \right]}^{ - m}} - 1} \right]\,{a_k}{{\left| z \right|}^{k - 1}}}}{{1 - \sum\limits_{k = j + 1}^\infty  {{{\left[ k \right]}^{ - n}}} \,{a_k}{{\left| z \right|}^{k - 1}}}}} \right\} \ge 0.
\end{align*}
That is, if
\begin{align*}
\sum\limits_{k = j + 1}^\infty  {{{\left[ k \right]}^{ - n}}} \left[ {\left( {\beta  + 1} \right){{\left[ k \right]}^{ - m}} - \beta } \right]{a_k} \le 1.
\end{align*}
Which completes the proof of the theorem. 
\end{proof}
\begin{corollary}
Let the function $f(z)$  defined by (1.3) is in the class $T(n,m,\beta)$ then
\begin{align*}
0 \le {a_k} \le \frac{1}{{\left[ {{k^{ - n}}} \right]\left[ {\left( {\beta  + 1} \right){k^{ - m}} - \beta } \right]}}\,\,,\quad k \ge j + 1.
\end{align*}
The result is sharp for the functions
\begin{align}
f(z) = z - \frac{1}{{\left[ {{k^{ - n}}} \right]\left[ {\left( {\beta  + 1} \right){k^{ - m}} - \beta } \right]}}.
\end{align}
\end{corollary}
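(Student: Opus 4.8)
The plan is to read this off directly from the coefficient characterization in Theorem 2.1. Since $f \in T(n,m,\beta)$, that theorem supplies the inequality $\sum_{k=j+1}^\infty k^{-n}\left[(\beta+1)k^{-m}-\beta\right]a_k \le 1$, and the whole argument rests on the single observation that each summand here is nonnegative, so that the full sum dominates any one of its individual terms.

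First I would dispose of the lower bound: $a_k \ge 0$ holds for every $k$ by the very definition (1.3) of the subclass $T_j$, so nothing further is needed there. For the upper bound I would fix an index $k \ge j+1$ and isolate the corresponding term from the Theorem 2.1 inequality, writing $k^{-n}\left[(\beta+1)k^{-m}-\beta\right]a_k \le \sum_{\ell=j+1}^\infty \ell^{-n}\left[(\beta+1)\ell^{-m}-\beta\right]a_\ell \le 1$; dividing through by the factor $k^{-n}\left[(\beta+1)k^{-m}-\beta\right]$ then yields exactly the claimed estimate $a_k \le \tfrac{1}{k^{-n}\left[(\beta+1)k^{-m}-\beta\right]}$.

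The only point requiring care — and the closest thing to an obstacle in an otherwise routine deduction — is the positivity of the multiplier $k^{-n}\left[(\beta+1)k^{-m}-\beta\right]$, since this is precisely what legitimizes both dropping the remaining (nonnegative) terms and dividing without reversing the inequality. Under the standing hypotheses this factor is positive, which I would confirm directly; for instance when $m \le 0$ one has $k^{-m}\ge 1$ and hence $(\beta+1)k^{-m}-\beta \ge 1 > 0$, and in general it is the same weight that already functions as a genuine nonzero constraint in Theorem 2.1.

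Finally, to establish sharpness I would exhibit the single-term extremal function $f(z) = z - \tfrac{1}{k^{-n}\left[(\beta+1)k^{-m}-\beta\right]}z^k$ of (2.3). For this $f$ the series in Theorem 2.1 collapses to its one nonzero term and holds with equality, so $f \in T(n,m,\beta)$ while its $k$-th coefficient attains the stated bound, confirming that the estimate cannot be improved.
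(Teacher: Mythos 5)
Your argument is correct and is exactly the deduction the paper intends: the corollary is stated as an immediate consequence of Theorem 2.1 (the paper gives no separate proof), and isolating the $k$-th nonnegative term of the sum $\sum_{k=j+1}^{\infty} k^{-n}\left[(\beta+1)k^{-m}-\beta\right]a_k \le 1$ and dividing by the positive weight is precisely the intended one-line argument, with sharpness witnessed by the single-term extremal function. Your added care about the positivity of the multiplier and the full form of the extremal function (including the factor $z^k$, which the paper's equation (2.3) omits) only improves on the paper's presentation.
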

\begin{theorem}
Let $0 \le {\beta _1} \le {\beta _2}$, then $T(n,m,{\beta _2}) \subseteq T(n,m,{\beta _1})$.
\end{theorem}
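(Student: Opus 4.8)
The plan is to prove the inclusion purely at the level of the coefficient criterion furnished by Theorem~2.1, so that no fresh function-theoretic estimate is required. I would begin by fixing an arbitrary $f(z) = z - \sum_{k=j+1}^{\infty} a_k z^k$ in $T(n,m,\beta_2)$ and recording, via Theorem~2.1 applied with $\beta = \beta_2$, the membership inequality
\[
\sum_{k=j+1}^{\infty} [k]^{-n}\left[(\beta_2+1)[k]^{-m} - \beta_2\right] a_k \le 1 .
\]
By the same theorem, now read with $\beta = \beta_1$, the target is to establish
\[
\sum_{k=j+1}^{\infty} [k]^{-n}\left[(\beta_1+1)[k]^{-m} - \beta_1\right] a_k \le 1 ,
\]
which would place $f$ in $T(n,m,\beta_1)$ and thereby give the claimed containment. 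So the entire task reduces to passing from the first displayed bound to the second.

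Because each coefficient $a_k \ge 0$ and each factor $[k]^{-n} > 0$, it suffices to compare the two sums term by term. Concretely, I would show that for every $k \ge j+1$,
\[
[k]^{-n}\left[(\beta_1+1)[k]^{-m} - \beta_1\right] \le [k]^{-n}\left[(\beta_2+1)[k]^{-m} - \beta_2\right],
\]
since multiplying through by the nonnegative weight $a_k$, summing over $k$, and chaining with the first displayed inequality immediately yields the desired bound for $\beta_1$. Cancelling the common positive factor $[k]^{-n}$ reduces this to the single scalar inequality $(\beta_1+1)[k]^{-m} - \beta_1 \le (\beta_2+1)[k]^{-m} - \beta_2$, which after collecting terms is exactly
\[
(\beta_2 - \beta_1)\left([k]^{-m} - 1\right) \ge 0 .
\]

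The main obstacle is precisely this last sign check, which amounts to determining the monotonicity of the map $\beta \mapsto (\beta+1)[k]^{-m} - \beta$. One factor is harmless: $\beta_2 - \beta_1 \ge 0$ by hypothesis. Everything therefore hinges on the sign of $[k]^{-m} - 1$ on the relevant index range $k \ge j+1 \ge 2$, and this is the step I would scrutinize most carefully, since it is exactly where the admissible range of the parameter $m$ enters. The coefficient map is nonincreasing or nondecreasing in $\beta$ according as $[k]^{-m} \le 1$ or $[k]^{-m} \ge 1$, so I would pin down the convention on $m$ that guarantees $[k]^{-m} \ge 1$ for all $k \ge j+1$ before asserting the term-by-term comparison; once that sign is secured, the summation argument above closes the proof without further computation.
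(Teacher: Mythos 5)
Your strategy is the same as the paper's: reduce membership to the coefficient inequality of Theorem 2.1 and compare the two sums term by term, using $a_k\ge 0$. The paper writes the termwise comparison down as an unexplained ``Consequently''; you go one step further and correctly isolate what it requires, namely $(\beta_2-\beta_1)\left([k]^{-m}-1\right)\ge 0$ for every $k\ge j+1$. That is exactly the right reduction, and your instinct to scrutinize the sign of $[k]^{-m}-1$ is well placed --- but the proposal stops there, and the step cannot be completed in the direction you need. In this paper $m$ is a positive integer (the exponent shift in $I^{n+m}$) and $k\ge j+1\ge 2$, so $[k]^{-m}=k^{-m}\le \tfrac12<1$, hence $[k]^{-m}-1<0$ and $(\beta_2-\beta_1)\left([k]^{-m}-1\right)\le 0$. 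The map $\beta\mapsto(\beta+1)[k]^{-m}-\beta$ is therefore nonincreasing, the termwise inequality runs the opposite way, and the hypothesis $\sum_{k}[k]^{-n}\left[(\beta_2+1)[k]^{-m}-\beta_2\right]a_k\le 1$ yields a lower bound, not an upper bound, for the corresponding sum with $\beta_1$. So the proof as proposed does not close; the same defect is present, silently, in the paper's own one-line argument.

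The inclusion itself is nevertheless immediate from the definition of the class rather than from Theorem 2.1: writing $w=I^{n+m}f(z)/I^{n}f(z)$, if $\operatorname{Re}w\ge\beta_2\left|w-1\right|$ and $0\le\beta_1\le\beta_2$, then $\operatorname{Re}w\ge\beta_2\left|w-1\right|\ge\beta_1\left|w-1\right|$, so $S(n,m,\beta_2)\subseteq S(n,m,\beta_1)$, and intersecting with $T_j$ gives the claim. If you want to keep the coefficient route, you would first have to settle the direction in which the characterization of Theorem 2.1 is monotone in $\beta$; as it stands, the sign you flagged is the fatal point, and you were right not to assert it.
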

\begin{proof}
Let the function $f(z)$ be defined by (1.3) be in the class $T(n,m,\beta_2)$  then by Theorem 2.1, we have
\begin{align*}
\sum\limits_{k = j + 1}^\infty  {{{\left[ k \right]}^{ - n}}} \left[ {\left( {{\beta _2} + 1} \right){{\left[ k \right]}^{ - m}} - {\beta _2}} \right]\,{a_k} \le 1.
\end{align*}
Consequently, 
\begin{align*}
\sum\limits_{k = j + 1}^\infty  {{{\left[ k \right]}^{ - n}}} \left[ {\left( {{\beta _1} + 1} \right){{\left[ k \right]}^{ - m}} - {\beta _1}} \right]{a_k} \le \sum\limits_{k = j + 1}^\infty  {{{\left[ k \right]}^{ - n}}} \left[ {\left( {{\beta _2} + 1} \right){{\left[ k \right]}^{ - m}} - {\beta _2}} \right]\,{a_k}.
\end{align*} 

\end{proof}
\begin{theorem}
For $\beta \rightarrow 0$, $T(n + 1,m,\beta) \subseteq T(n,m,\beta)$.
\end{theorem}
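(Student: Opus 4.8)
The plan is to reduce everything to the coefficient inequality of Theorem 2.1 and then to a term-by-term comparison of the two defining functionals, exactly in the spirit of the proof of Theorem 2.3. Writing $W_k(n,\beta) = [k]^{-n}\bigl[(\beta+1)[k]^{-m} - \beta\bigr]$ for the weight attached to $a_k$, Theorem 2.1 says that membership in $T(n,m,\beta)$ is equivalent to $\sum_{k=j+1}^{\infty} W_k(n,\beta)\,a_k \le 1$. Thus the whole statement becomes the implication: if $\sum_{k=j+1}^{\infty} W_k(n+1,\beta)\,a_k \le 1$, then $\sum_{k=j+1}^{\infty} W_k(n,\beta)\,a_k \le 1$.

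First I would fix $f(z)$ defined by (1.3) with $f \in T(n+1,m,\beta)$ and record, from Theorem 2.1 with $n$ replaced by $n+1$, that $\sum_{k=j+1}^{\infty} W_k(n+1,\beta)\,a_k \le 1$. Since every $a_k \ge 0$, it would suffice to establish the pointwise weight comparison $W_k(n,\beta) \le W_k(n+1,\beta)$ for each $k \ge j+1$; multiplying by $a_k$, summing, and chaining the inequalities would then give $\sum W_k(n,\beta)\,a_k \le \sum W_k(n+1,\beta)\,a_k \le 1$, which by Theorem 2.1 is exactly $f \in T(n,m,\beta)$.

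The role of the hypothesis $\beta \to 0$ is to control the common bracket factor $(\beta+1)[k]^{-m}-\beta$, which tends to $[k]^{-m} \ge 0$; nonnegativity of this factor is what would make the term-by-term comparison legitimate, since with a signed factor the inequalities could reverse. In this limit $W_k(n,\beta)$ collapses to $[k]^{-(n+m)}$, so the comparison to be checked is simply one between $[k]^{-(n+m)}$ and $[k]^{-(n+1+m)}$, with the $a_k$ carried along as fixed nonnegative multipliers.

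I expect the main obstacle to be precisely this weight comparison across consecutive values of the operator index. For $k \ge j+1 \ge 2$ one has $[k]^{-(n+1)} = [k]^{-1}[k]^{-n}$ with $[k]^{-1} < 1$, so the delicate point is to determine the correct direction of the resulting inequality for all admissible $k$ and to confirm that this direction is compatible with the claimed inclusion. Once that pointwise estimate is secured in the stated direction, the remainder is a routine substitution into the characterization of Theorem 2.1 followed by a single monotone summation against the $a_k$.
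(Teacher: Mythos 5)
Your reduction to Theorem 2.1 and to a term-by-term comparison of the weights $W_k(n,\beta)=[k]^{-n}\bigl[(\beta+1)[k]^{-m}-\beta\bigr]$ is exactly the paper's strategy, and you have correctly isolated the single step on which everything depends: the pointwise inequality $W_k(n,\beta)\le W_k(n+1,\beta)$. But you leave that step unresolved (``the delicate point is to determine the correct direction''), and in fact it cannot be resolved in the direction your chaining argument needs. Since $[k]^{-(n+1)}=[k]^{-1}[k]^{-n}$ with $[k]^{-1}<1$ for $k\ge j+1\ge 2$, and since the bracket $(\beta+1)[k]^{-m}-\beta$ is positive for small $\beta$, one has $W_k(n+1,\beta)<W_k(n,\beta)$ --- the opposite of what is required. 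Concretely, the extremal function $f(z)=z-\frac{1}{W_{j+1}(n+1,\beta)}\,z^{j+1}$ satisfies the coefficient condition of Theorem 2.1 for $T(n+1,m,\beta)$ with equality, yet $\sum_k W_k(n,\beta)\,a_k=j+1\ge 2>1$, so $f\notin T(n,m,\beta)$. With these integral-operator weights the inclusion that actually holds is the reverse one, $T(n,m,\beta)\subseteq T(n+1,m,\beta)$; the statement appears to have been carried over from the Salagean differential-operator setting, where the weights $k^{n}$ increase with $n$ and the asserted direction is correct.

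For what it is worth, the paper's own proof consists of precisely the unjustified assertion $\sum_{k}[k]^{-n}[\cdots]\,a_k\le\sum_k[k]^{-n-1}[\cdots]\,a_k$, which is false term by term for the same reason. So your hesitation at this point was well placed, but no proof along these lines (or any other) can be completed, because the claimed inclusion itself fails under the paper's own characterization of the class.
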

\begin{proof}
Let the function $f(z)$ defined by (1.3) be in th class $T(n + 1,m,\beta)$
then by Theorem 2.1, we have 
\begin{equation*}
\sum\limits_{k = j + 1}^\infty  {{{\left[ k \right]}^{ - n - 1}}} \left[ {\left( {\beta  + 1} \right){{\left[ k \right]}^{ - m}} - \beta } \right]{a_k} \le 1.
\end{equation*} 
Consequently,
\begin{align*}
\sum\limits_{k = j + 1}^\infty  {{{\left[ k \right]}^{ - n}}} \left[ {\left( {\beta  + 1} \right){{\left[ k \right]}^{ - m}} - \beta } \right]{a_k} \le \sum\limits_{k = j + 1}^\infty  {{{\left[ k \right]}^{ - n - 1}}} \left[ {\left( {\beta  + 1} \right){{\left[ k \right]}^{ - m}} - \beta } \right]{a_k}. 
\end{align*} 
.
\end{proof}

\begin{theorem}
$T(n, m, \beta)$ is a convex set.
\end{theorem}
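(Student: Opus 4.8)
The plan is to lean entirely on Theorem 2.1, which makes membership in $T(n,m,\beta)$ equivalent to the single linear inequality (2.1) in the coefficients $a_k$. Saying that $T(n,m,\beta)$ is a convex set means that whenever $f,g \in T(n,m,\beta)$ and $0 \le \lambda \le 1$, the function $h = \lambda f + (1-\lambda) g$ again lies in $T(n,m,\beta)$. Since the defining condition (2.1) is linear in the coefficients, the whole problem reduces to the elementary fact that a sublevel set of a linear functional is closed under convex combinations.

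First I would write the two functions in the normalized form (1.3),
\[
f(z) = z - \sum_{k=j+1}^\infty a_k z^k, \qquad g(z) = z - \sum_{k=j+1}^\infty b_k z^k, \qquad a_k, b_k \ge 0,
\]
and form $h(z) = \lambda f(z) + (1-\lambda) g(z) = z - \sum_{k=j+1}^\infty \big(\lambda a_k + (1-\lambda) b_k\big) z^k$. Because $\lambda, 1-\lambda \ge 0$ and $a_k, b_k \ge 0$, each combined coefficient $\lambda a_k + (1-\lambda) b_k$ is nonnegative, so $h$ has the form (1.3) and therefore belongs to $T_j$. This small bookkeeping step must be verified before the coefficient test from Theorem 2.1 can legitimately be applied to $h$.

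Next I would substitute the coefficients of $h$ into the left-hand side of (2.1) and split the resulting series by linearity:
\[
\sum_{k=j+1}^\infty {[k]}^{-n}\big[(\beta+1){[k]}^{-m}-\beta\big]\big(\lambda a_k + (1-\lambda) b_k\big)
= \lambda \, S_f + (1-\lambda)\, S_g,
\]
where $S_f$ and $S_g$ denote the corresponding sums for $f$ and $g$. By the necessity (``only if'') part of Theorem 2.1 applied to $f$ and to $g$ we have $S_f \le 1$ and $S_g \le 1$, so the combined sum is at most $\lambda + (1-\lambda) = 1$. Invoking the sufficiency (``if'') part of Theorem 2.1, this places $h$ in $T(n,m,\beta)$.

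There is no genuine obstacle here; the proof is essentially automatic once the coefficient characterization is available. The only points requiring attention are that Theorem 2.1 be used in both directions and that nonnegativity of the combined coefficients be checked so that $h$ really lies in $T_j$. The same computation extends verbatim to any finite convex combination $\sum_{i=1}^{\ell} \mu_i f_i$ with $\mu_i \ge 0$ and $\sum_{i=1}^{\ell} \mu_i = 1$, since the bound then becomes $\sum_{i=1}^{\ell} \mu_i = 1$; hence $T(n,m,\beta)$ is convex.
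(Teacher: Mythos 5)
Your proof is correct and follows essentially the same route as the paper: form the convex combination of the coefficient sequences, use linearity of the sum in (2.1), and apply Theorem 2.1 in both directions. Your additional check that the combined coefficients $\lambda a_k + (1-\lambda)b_k$ are nonnegative (so the combination genuinely lies in $T_j$) is a small point the paper leaves implicit, but the argument is the same.
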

\begin{proof}
Let the function 
\begin{equation}
f(z) = z - \sum\limits_{k = j + 1}^\infty  {{a_{k,v}}} {z^k}\,\,\,\quad \left( {{a_{k,v}} \ge 0,\,v = 1,2} \right)
\end{equation}
be in the class $T(n,m,\beta)$. It is sufficient to show that $g(z)$ defined by 
\begin{align*}
g(z) = z - \sum\limits_{k = j + 1}^\infty  {\left[ {\lambda {a_{k,1}} + (1 - \lambda ){a_{k,2}}} \right]\,\,} {z^k}, \quad (0 \le \lambda  \le 1)
\end{align*}
is also in the class $T(n ,m,\beta)$.\\
By using Theorem 2.1, we obtain
\begin{equation*}
\sum\limits_{k = j + 1}^\infty  {{{\left[ k \right]}^{ - n}}\left[ {\left( {\beta  + 1} \right){{\left[ k \right]}^{ - m}} - \beta } \right]\left[ {\lambda {a_{k,1}} + \left( {1 - \lambda } \right){a_{k,2}}} \right] \le 1}. 
\end{equation*} 
which implies that $g(z) \in T(n, m,\beta)$.\\
Hence, $T(n ,m,\beta)$ is a convex set.
\end{proof}
\begin{theorem}
Let the function $f(z)$ be defined by (1.3) be in the class $T(n,m,\beta)$  then for $\left|z\right|=r<1$, 
\begin{equation}
\left| {{I^i}f(z)} \right| \ge r - \frac{{{r^{j + 1}}}}{{{{\left[ 2 \right]}^{ - n - i}}\left[ {\left( {\beta  + 1} \right){{\left[ 2 \right]}^{ - m}} - \beta } \right]}}
\end{equation}
and
\begin{equation}
\left| {{I^i}f(z)} \right| \le r + \frac{{{r^{j + 1}}}}{{{{\left[ 2 \right]}^{ - n - i}}\left[ {\left( {\beta  + 1} \right){{\left[ 2 \right]}^{ - m}} - \beta } \right]}}
\end{equation}
For $z \in U$ and \  $0\le i \le n$.
\end{theorem}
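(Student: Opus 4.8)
The plan is to reduce both the lower and the upper estimate to a single bound on the tail sum $\sum_{k=j+1}^{\infty} k^{-i} a_k$, and then to extract that bound from the coefficient criterion already proved in Theorem 2.1. First I would write the operator out explicitly: for $f(z)=z-\sum_{k=j+1}^{\infty} a_k z^k\in T(n,m,\beta)$ the definition of $I^i$ gives $I^i f(z)=z-\sum_{k=j+1}^{\infty} k^{-i} a_k z^k$. Setting $|z|=r$ and applying the triangle inequality in both directions yields
\[
r-\sum_{k=j+1}^{\infty} k^{-i} a_k\, r^{k}\;\le\;\bigl|I^i f(z)\bigr|\;\le\;r+\sum_{k=j+1}^{\infty} k^{-i} a_k\, r^{k}.
\]
Because $r<1$ and $k\ge j+1$ we have $r^{k}\le r^{j+1}$, so $\sum_{k=j+1}^{\infty} k^{-i} a_k\, r^{k}\le r^{j+1}\sum_{k=j+1}^{\infty} k^{-i} a_k$; substituting this into both ends collapses the whole theorem to the single estimate $\sum_{k=j+1}^{\infty} k^{-i} a_k\le \bigl([2]^{-n-i}[(\beta+1)[2]^{-m}-\beta]\bigr)^{-1}$.

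Next I would derive that estimate from Theorem 2.1, which states $\sum_{k=j+1}^{\infty}\Phi(k)\,a_k\le 1$ with $\Phi(k)=k^{-n}\bigl[(\beta+1)k^{-m}-\beta\bigr]$. The idea is to dominate the weight $k^{-i}$ attached to $a_k$ in the target sum by a fixed multiple of $\Phi(k)$: if one exhibits a constant $C>0$ with $C\,k^{-i}\le \Phi(k)$ for every $k\ge j+1$, then $C\sum k^{-i}a_k\le \sum \Phi(k)a_k\le 1$, and hence $\sum k^{-i}a_k\le 1/C$. The constant $C$ is taken to be the weight of the Theorem~2.1 criterion evaluated at the initial index $k=j+1$ (here specialised to $j=1$, so that the index is $2$), modulated by the $k^{-i}$ factor that the operator $I^i$ introduces; this is exactly what produces the denominator $[2]^{-n-i}[(\beta+1)[2]^{-m}-\beta]$ appearing in the statement.

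The main obstacle is justifying the domination $C\,k^{-i}\le\Phi(k)$ uniformly in $k$, i.e.\ that the quantity $k^{i}\Phi(k)=k^{\,i-n}\bigl[(\beta+1)k^{-m}-\beta\bigr]$ does not dip below its value at the starting index as $k$ increases. This is precisely where the hypothesis $0\le i\le n$ enters: it forces the factor $k^{\,i-n}$ to be non-increasing, and one must combine this with the monotone behaviour of the bracket $(\beta+1)k^{-m}-\beta$ to locate the extremal index. I expect this monotonicity check to be the only non-mechanical part of the argument; everything else is triangle inequality and the geometric bound $r^{k}\le r^{j+1}$. Once the value $1/C$ is in hand, feeding it into the two ends of the first display settles the lower bound (2.6) and the upper bound (2.7) at a single stroke, and the extremal function of Corollary 2.2, evaluated at the index $k=j+1$, indicates that the estimates are attained.
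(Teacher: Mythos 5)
Your proposal follows the paper's proof essentially verbatim: expand $I^i f(z)=z-\sum_{k\ge j+1}k^{-i}a_kz^k$, bound the tail $\sum_{k\ge j+1}k^{-i}a_k$ by dominating the weight $k^{-i}$ with a constant multiple of the Theorem 2.1 weight, and finish with the triangle inequality together with $r^{k}\le r^{j+1}$. The term-by-term domination $\left[2\right]^{-n-i}\left[(\beta+1)\left[2\right]^{-m}-\beta\right]k^{-i}\le k^{-n}\left[(\beta+1)k^{-m}-\beta\right]$ that you single out as the one step requiring a monotonicity check is precisely the inequality the paper asserts without justification, so your route coincides with the published one (and is, if anything, more candid about where the real work lies).
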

\begin{proof}
Note that $ f(z) \in T(n,m,\beta )$ if and only if 
${I^i}f(z) \in T(n-i,m,\beta)$ and
\begin{equation}
{I^i}f(z) = z - \sum\limits_{k = j + 1}^\infty  {{{\left[ k \right]}^{ - i}}{a_k}{z^k}} .
\end{equation}
By Theorem 2.1, we know that
\begin{align*}
{\left[ 2 \right]^{ - n - i}}\left[ {\left( {\beta  + 1} \right){{\left[ 2 \right]}^{ - m}} - \beta } \right]\sum\limits_{k = j + 1}^\infty  {{{\left[ k \right]}^{ - i}}{a_k}}  \le \sum\limits_{k = j + 1}^\infty  {{{\left[ k \right]}^{ - n}}\left[ {\left( {\beta  + 1} \right){{\left[ 2 \right]}^{ - m}} - \beta } \right]{a_k}}  \le 1.
\end{align*}
That is,\\
\begin{equation}
\sum\limits_{k = j + 1}^\infty  {{{\left[ k \right]}^{ - i}}{a_k} \le \frac{1}{{{{\left[ 2 \right]}^{ - n - i}}\left[ {\left( {\beta  + 1} \right){{\left[ 2 \right]}^{ - m}} - \beta } \right]}}} 
\end{equation}
\begin{align*}
\begin{array}{l}
\left| {{I^i}f(z)} \right| \le \left| z \right| + {r^{j + 1}}\sum\limits_{k = j + 1}^\infty  {{{\left[ k \right]}^{ - i}}{a_k}} \\
\,\,\,\,\,\,\,\,\,\,\,\,\,\,\, \le r + {r^{j + 1}}\frac{1}{{{{\left[ 2 \right]}^{ - n - i}}\left[ {\left( {\beta  + 1} \right){{\left[ 2 \right]}^{ - m}} - \beta } \right]}}
\end{array}
\end{align*}
and
\begin{align*}
\left| {{I^i}f(z)} \right| \ge r - {r^{j + 1}}\frac{1}{{{{\left[ 2 \right]}^{ - n - i}}\left[ {\left( {\beta  + 1} \right){{\left[ 2 \right]}^{ - m}} - \beta } \right]}}.
\end{align*}

\end{proof}
\begin{corollary}
Let the function $f(z)$ be defined by (1.3) be in the class $T(n,m,\beta)$  then for $\left|z\right|=r<1$,
\begin{equation}
\left| {f(z)} \right| \ge r - \frac{{{r^{j + 1}}}}{{{{\left[ 2 \right]}^{ - n}}\left[ {\left( {\beta  + 1} \right){{\left[ 2 \right]}^{ - m}} - \beta } \right]}}
\end{equation}
and
\begin{equation}
\left| {f(z)} \right| \le r + \frac{{{r^{j + 1}}}}{{{{\left[ 2 \right]}^{ - n}}\left[ {\left( {\beta  + 1} \right){{\left[ 2 \right]}^{ - m}} - \beta } \right]}},  \quad (z \in F).
\end{equation}
The equalities in (2.9) and (2.10) are attained for the function given by 
\begin{align*}
f(z) = z - \frac{{{z^{j + 1}}}}{{{{\left[ 2 \right]}^{ - n}}\left[ {\left( {\beta  + 1} \right){{\left[ 2 \right]}^{ - m}} - \beta } \right]}}.
\end{align*}
\end{corollary}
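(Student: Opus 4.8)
The plan is to obtain this corollary as the special case $i=0$ of Theorem 2.6. Recall from the definition of the integral operator that $I^0 f(z) = f(z)$; consequently the bounds (2.7) and (2.8) of Theorem 2.6, evaluated at $i=0$, control $|f(z)|$ directly. Setting $i=0$ turns the denominator $[2]^{-n-i}$ into $[2]^{-n-0}=[2]^{-n}$, so inequality (2.7) becomes precisely (2.9) and inequality (2.8) becomes precisely (2.10). No additional estimation is required, since Theorem 2.6 already supplies the complete chain of inequalities for every $0\le i\le n$, and the corollary simply records the endpoint $i=0$.

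For the sharpness claim, I would test the candidate
\begin{align*}
f(z) = z - \frac{z^{j+1}}{[2]^{-n}\left[(\beta+1)[2]^{-m}-\beta\right]}.
\end{align*}
This function is of the form (1.3) with a single nonzero coefficient. First I would confirm, via the coefficient criterion (2.1) of Theorem 2.1, that this $f$ belongs to $T(n,m,\beta)$, since substituting its lone coefficient makes the left-hand side of (2.1) equal to $1$. Then I would evaluate the modulus on the real axis: at $z=r$ one obtains $|f(r)| = r - r^{j+1}/\left([2]^{-n}[(\beta+1)[2]^{-m}-\beta]\right)$, which attains equality in (2.9), while choosing the point on $|z|=r$ that maximizes $|f|$ (where the factor $1-a_{j+1}z^{j}$ has modulus $1+a_{j+1}r^{j}$) yields equality in (2.10).

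The only step requiring genuine care is the sharpness verification rather than the derivation of the bounds themselves. One must check that the extremal function truly lies in the class, which follows at once from Theorem 2.1, and that the modulus is computed at the appropriate point on the circle $|z|=r$ so that the single-term estimate $r^{k}\le r^{j+1}$ is saturated at $k=j+1$. The distortion inequalities are an immediate specialization of Theorem 2.6 and present no real obstacle.
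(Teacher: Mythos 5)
Your proposal matches the paper's own argument: the paper proves this corollary in one line by taking $i=0$ in the preceding distortion theorem, exactly as you do. Your additional verification of sharpness via the extremal function and Theorem 2.1 is a reasonable supplement that the paper omits, but the core route is identical.
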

\begin{proof}
Taking $i=0$ in Theorem 2.5, we immediately obtain (2.9) and (2.10).
\end{proof}
\begin{theorem}
Let ${f_j}(0) = z$ and 
\begin{align*}
{f_k}(z) = z - \frac{1}{{{{\left[ k \right]}^{ - n}}\left[ {\left( {\beta  + 1} \right){{\left[ k \right]}^{ - m}} - \beta } \right]}}{z^k},\,\,\,\,\,\quad \left( {k \ge j + 1\,\,;\,\,n \in N} \right).
\end{align*}
For $\beta \ge 0$.Then $f(z)$ is in the class $T(n,m,\beta)$ if and only if it can be expressed as
\begin{align}
f(z) = \sum\limits_{k = j}^\infty  {{\mu _k}\,} {f_k}(z) \  where \  {\mu _k}\ge 0 \ \  and  \ \ \sum\limits_{k = j}^\infty  {{\mu _k}\,}  = 1.
\end{align}
\end{theorem}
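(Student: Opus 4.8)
The plan is to reduce this extreme-points representation to a bookkeeping identity at the level of Taylor coefficients, using the characterization already established in Theorem 2.1. For brevity write
\[
d_k = {\left[ k \right]}^{ - n}\left[ {\left( {\beta  + 1} \right){{\left[ k \right]}^{ - m}} - \beta } \right],
\]
so that $f_k(z) = z - d_k^{-1} z^k$ for $k \ge j+1$, $f_j(z) = z$, and Theorem 2.1 says that $f(z) = z - \sum_{k=j+1}^\infty a_k z^k$ lies in $T(n,m,\beta)$ precisely when $\sum_{k=j+1}^\infty d_k a_k \le 1$. I would first record that for $\beta \ge 0$ and $k \ge j+1 \ge 2$ each coefficient $d_k$ is positive; this is what makes the correspondence below a genuine bijection and both implications reversible.

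For the sufficiency direction I would begin from a convex combination $f(z) = \sum_{k=j}^\infty \mu_k f_k(z)$ with $\mu_k \ge 0$ and $\sum_{k} \mu_k = 1$, substitute the explicit form of $f_k$, and collect the linear terms using $\sum_{k}\mu_k = 1$. This produces $f(z) = z - \sum_{k=j+1}^\infty (\mu_k/d_k) z^k$, so the $k$-th coefficient of $f$ is $a_k = \mu_k / d_k$. Feeding this into the test sum gives $\sum_{k=j+1}^\infty d_k a_k = \sum_{k=j+1}^\infty \mu_k = 1 - \mu_j \le 1$, and Theorem 2.1 then yields $f \in T(n,m,\beta)$.

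For the necessity direction I would run the identical computation in reverse. Given $f \in T(n,m,\beta)$ with $a_k \ge 0$ and $\sum_{k=j+1}^\infty d_k a_k \le 1$, I would set $\mu_k = d_k a_k$ for $k \ge j+1$ and absorb the slack into $\mu_j = 1 - \sum_{k=j+1}^\infty \mu_k$. Positivity of $d_k$ and $a_k$ gives $\mu_k \ge 0$ for $k \ge j+1$, the inequality of Theorem 2.1 forces $\mu_j \ge 0$, and by construction $\sum_{k=j}^\infty \mu_k = 1$. A direct substitution then recovers $\sum_{k=j}^\infty \mu_k f_k(z) = z - \sum_{k=j+1}^\infty (\mu_k/d_k) z^k = z - \sum_{k=j+1}^\infty a_k z^k = f(z)$, which is the required representation.

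There is no genuine analytic obstacle here: the whole content is the reciprocal correspondence $a_k \leftrightarrow \mu_k = d_k a_k$ between the coefficient data and the weights, with $\mu_j$ recording the deficit $1 - \sum_{k=j+1}^\infty d_k a_k$. The only step deserving care is the positivity check $d_k > 0$, since the two implications are mirror images of one algebraic identity and are valid only when the map between the admissible ranges of the $a_k$ and the $\mu_k$ is well defined; I would dispose of that once at the outset and let the rest follow mechanically.
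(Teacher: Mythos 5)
Your proposal is correct and follows essentially the same route as the paper: both directions rest on the correspondence $\mu_k = d_k a_k$ for $k \ge j+1$ with $\mu_j$ absorbing the slack $1 - \sum_{k=j+1}^\infty d_k a_k$, combined with the coefficient criterion of Theorem 2.1. Your write-up is in fact slightly more careful than the paper's (you retain the factor $\mu_k$ in the expanded coefficient of $f$, which the paper's displayed formula accidentally drops, and you note the positivity of $d_k$ explicitly), but the underlying argument is identical.
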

\begin{proof}
Assume that \\
\begin{align*}
f(z) = \sum\limits_{k = j}^\infty  {{\mu _k}\,} {f_k}(z) = z - \sum\limits_{k = j + 1}^\infty  {\frac{1}{{{{\left[ k \right]}^{ - n}}\left[ {\left( {\beta  + 1} \right){{\left[ k \right]}^{ - m}} - \beta } \right]}}{z^k}} .
\end{align*}
Then it follows that, 
\begin{equation*}
\sum\limits_{k = j + 1}^\infty  {{{\left[ k \right]}^{ - n}}\left[ {\left( {\beta  + 1} \right){{\left[ k \right]}^{ - m}} - \beta } \right]} \frac{1}{{{{\left[ k \right]}^{ - n}}\left[ {\left( {\beta  + 1} \right){{\left[ k \right]}^{ - m}} - \beta } \right]}}{\mu _k} = \sum\limits_{k = j + 1}^\infty  {{\mu _k} = 1 - {\mu _j}}  \le 1.
\end{equation*}
Conversely, assume that the function defined by (1.3) belongs to class. Then
\begin{align*}
{a_k} \le \frac{1}{{{{\left[ k \right]}^{ - n}}\left[ {\left( {\beta  + 1} \right){{\left[ k \right]}^{ - m}} - \beta } \right]}},\,\,\quad (k \ge j + 1\,\,,n \in {N_0})
\end{align*} 
Setting,
\begin{align*}
{\mu _k} = {\left[ k \right]^{ - n}}\left[ {\left( {\beta  + 1} \right){{\left[ k \right]}^{ - m}} - \beta } \right]{a_k}\,,\,\,\,(k \ge j + 1\,\,,n \in {N_0})
\end{align*}
and\  ${\mu _j} = 1 - \sum\limits_{k = j + 1}^\infty  {{\mu _k}}$.\\
We can see that $f(z)$ can be expressed in the form of (2.11).\\

\end{proof}
\begin{theorem}
Let the function $f(z)$ be defined by (1.3) be in the class $T(n,m,\beta )$  then $f(z)$ is close to convex of order $\rho (0\le \rho <1)$ in $\left|z\right|<r_1$, where\\
$ r_1 =r_1(n,m,\beta,\rho)$
\begin{align}
= \mathop {\inf }\limits_k {\left[ {\left( {\frac{{1 - \rho }}{k}} \right)\left\{ {{{\left[ k \right]}^{ - n}}\left[ {\left( {\beta  + 1} \right){{\left[ k \right]}^{ - m}} - \beta } \right]} \right\}} \right]^{\frac{1}{{k - 1}}}}
\end{align}
The result is sharp with the extremal function  $f(z)$ given by (2.3). 
\end{theorem}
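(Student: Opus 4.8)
The plan is to invoke the familiar sufficient condition for close-to-convexity of order $\rho$: it suffices to show that $|f'(z)-1| \le 1-\rho$ on the disc in question, since this inequality forces $\operatorname{Re}\{f'(z)\} \ge \rho$, which is the defining property (taken here with respect to the identity function $g(z)=z$). First I would differentiate (1.3) term by term to get $f'(z) = 1 - \sum_{k=j+1}^\infty k\,a_k z^{k-1}$, so that by the triangle inequality
$$|f'(z)-1| \le \sum_{k=j+1}^\infty k\,a_k|z|^{k-1}.$$
It is therefore enough to determine the largest $r$ for which the right-hand side stays at most $1-\rho$ on $|z|<r$.

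The key step is a term-by-term comparison against the coefficient bound of Theorem 2.1. I would require, for every $k \ge j+1$,
$$\frac{k\,|z|^{k-1}}{1-\rho} \le [k]^{-n}\bigl[(\beta+1)[k]^{-m}-\beta\bigr];$$
granting this, the estimate $\sum_{k=j+1}^\infty [k]^{-n}\bigl[(\beta+1)[k]^{-m}-\beta\bigr]a_k \le 1$ from Theorem 2.1 yields
$$\sum_{k=j+1}^\infty k\,a_k|z|^{k-1} \le (1-\rho)\sum_{k=j+1}^\infty [k]^{-n}\bigl[(\beta+1)[k]^{-m}-\beta\bigr]a_k \le 1-\rho.$$
Solving the displayed term-by-term inequality for $|z|$ gives $|z| \le \bigl[\bigl(\tfrac{1-\rho}{k}\bigr)\{[k]^{-n}[(\beta+1)[k]^{-m}-\beta]\}\bigr]^{1/(k-1)}$, and since this must hold simultaneously for every $k \ge j+1$, taking the infimum over $k$ produces exactly the radius $r_1$ of (2.12).

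The main obstacle is justifying that the term-by-term reduction is legitimate, namely that controlling each individual term by its corresponding Theorem 2.1 weight really bounds the entire series by $1-\rho$ throughout $|z|<r_1$; it is the infimum, rather than any single-index estimate, that guarantees the inequality holds for the whole tail at once. For sharpness I would test the extremal function of (2.3), which has the single nonzero coefficient $a_k = 1/\bigl([k]^{-n}[(\beta+1)[k]^{-m}-\beta]\bigr)$: for this $f$ the inequality of Theorem 2.1 holds with equality, so the whole chain above collapses to equalities at $|z|=r_1$, and $\operatorname{Re}\{f'(z)\}$ drops precisely to $\rho$ there, confirming that the radius cannot be enlarged.
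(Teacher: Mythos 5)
Your proposal is correct and follows essentially the same route as the paper: bound $|f'(z)-1|$ by $\sum_{k\ge j+1}k\,a_k|z|^{k-1}$, reduce the required inequality $\sum\frac{k}{1-\rho}a_k|z|^{k-1}\le 1$ to the term-by-term comparison $\frac{k}{1-\rho}|z|^{k-1}\le [k]^{-n}[(\beta+1)[k]^{-m}-\beta]$ against the coefficient condition of Theorem 2.1, and solve for $|z|$. Your added remarks on why the infimum over $k$ is the right radius and on verifying sharpness with the one-term extremal function are sound elaborations of steps the paper leaves implicit.
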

\begin{proof}
We must show that $\left| {{f^{'}}\left( z \right) - 1} \right| \le \left( {1 - \rho } \right)$ for $\left|z\right|<r_{1}(n,m,\beta,\rho)$.
Indeed we find from (1.3) that
\begin{align*}
\left| {{f^{'}}\left( z \right) - 1} \right| \le \sum\limits_{k = j + 1}^\infty  {k\,{a_k}\,{{\left| z \right|}^{k - 1}}}.
\end{align*}
Thus $\left| {{f^{'}}\left( z \right) - 1} \right| \le \left( {1 - \rho } \right)$,
\begin{align}
if  \quad  \sum\limits_{k = j + 1}^\infty  {\frac{k}{{1 - \rho }}} \,{a_k}\,{\left| z \right|^{k - 1}} \le 1.
\end{align}
But by Theorem 2.1, equation (2.13) will be true if 
\begin{align*}
\frac{k}{{1 - \rho }}{\left| z \right|^{k - 1}} \le {\left[ k \right]^{ - n}}\left[ {\left( {\beta  + 1} \right){{\left[ k \right]}^{ - m}} - \beta } \right]
\end{align*}
that is, if 
\begin{equation}
\left| z \right| \le {\left[ {\left( {\frac{{1 - \rho }}{k}} \right)\left\{ {{{\left[ k \right]}^{ - n}}\left[ {\left( {\beta  + 1} \right){{\left[ k \right]}^{ - m}} - \beta } \right]} \right\}} \right]^{\frac{1}{{k - 1}}}}
\end{equation}
Theorem 2.7 follows easily from (2.14).
\end{proof}
\begin{theorem}
Let the function $f(z)$ be defined by (1.3) be in the class $T(n,m,\beta)$  then $f(z)$ is starlike of order $\rho (0\le \rho <1)$ in $\left|z\right|<r_2$, where\\
$ r_2 =r_2(n,m,\rho)$
\begin{align}
= \mathop {\inf }\limits_k {\left[ {\left( {\frac{{1 - \rho }}{k-\rho}} \right)\left\{ {{{\left[ k \right]}^{ - n}}\left[ {\left( {\beta  + 1} \right){{\left[ k \right]}^{ - m}} - \beta } \right]} \right\}} \right]^{\frac{1}{{k - 1}}}}
\end{align}
The result is sharp with the extremal $f(z)$ given by equation (2.3).
\end{theorem}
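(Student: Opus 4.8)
The plan is to mirror the proof of Theorem 2.7, replacing the close-to-convexity test by the criterion for starlikeness of order $\rho$. Recall that $f$ is starlike of order $\rho$ on $|z|<r_2$ as soon as $\mathrm{Re}\{zf'(z)/f(z)\}>\rho$ there; since any $w$ with $|w-1|\le 1-\rho$ automatically satisfies $\mathrm{Re}(w)\ge\rho$, it suffices to establish the bound $\left|\frac{zf'(z)}{f(z)}-1\right|\le 1-\rho$. So the whole argument reduces to producing the largest disc on which this single inequality holds.

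First I would differentiate (1.3) and form the quotient, obtaining
\begin{align*}
\left|\frac{zf'(z)}{f(z)}-1\right|
=\frac{\left|\sum\limits_{k=j+1}^\infty (k-1)a_k z^{k-1}\right|}
{\left|1-\sum\limits_{k=j+1}^\infty a_k z^{k-1}\right|}
\le \frac{\sum\limits_{k=j+1}^\infty (k-1)a_k|z|^{k-1}}
{1-\sum\limits_{k=j+1}^\infty a_k|z|^{k-1}},
\end{align*}
where the last step uses $a_k\ge 0$ together with the triangle inequality in numerator and denominator. Demanding that the right-hand side be at most $1-\rho$ and clearing the (positive) denominator rearranges to the clean condition
\begin{align*}
\sum\limits_{k=j+1}^\infty \frac{k-\rho}{1-\rho}\,a_k|z|^{k-1}\le 1.
\end{align*}

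Next I would bring in Theorem 2.1, which supplies the global constraint $\sum_{k=j+1}^\infty [k]^{-n}\left[(\beta+1)[k]^{-m}-\beta\right]a_k\le 1$. The decisive step is to reduce the displayed series inequality to the term-by-term domination
\begin{align*}
\frac{k-\rho}{1-\rho}\,|z|^{k-1}\le [k]^{-n}\left[(\beta+1)[k]^{-m}-\beta\right],
\end{align*}
since, every $a_k$ being nonnegative, this pointwise bound forces the summed inequality above via comparison with the Theorem 2.1 constraint. Solving the pointwise bound for $|z|$ gives $|z|\le \left[\frac{1-\rho}{k-\rho}\{[k]^{-n}[(\beta+1)[k]^{-m}-\beta]\}\right]^{1/(k-1)}$, and taking the infimum over $k\ge j+1$ yields exactly the radius $r_2$ in (2.15). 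Sharpness would follow by substituting the one-term extremal function (2.3): for it the numerator and denominator estimates become equalities, so $\left|zf'(z)/f(z)-1\right|=1-\rho$ is attained precisely at $|z|=r_2$.

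The step I expect to be the crux is the reduction from the single summed inequality to the term-by-term inequality. This is not an identity but a sufficiency argument, legitimate only because the coefficients $a_k$ are nonnegative; the infimum over $k$ is precisely what guarantees the pointwise bound holds simultaneously for all indices, and hence that the starlikeness estimate is valid throughout the disc $|z|<r_2$.
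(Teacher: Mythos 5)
Your proposal is correct and follows essentially the same route as the paper: reduce starlikeness of order $\rho$ to the bound $\left|zf'(z)/f(z)-1\right|\le 1-\rho$, translate that into the series condition $\sum_{k=j+1}^{\infty}\frac{k-\rho}{1-\rho}a_k|z|^{k-1}\le 1$, and secure it by term-by-term comparison with the Theorem 2.1 coefficient inequality, then solve for $|z|$ and take the infimum. In fact you supply the algebra (the quotient bound and the clearing of the denominator) that the paper's own proof states without derivation, so your write-up is a more complete version of the same argument.
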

\begin{proof}
It is sufficient to show that 
\begin{align*}
\left| {\frac{{z{f^{'}}(z)}}{{f\left( z \right)}} - 1} \right| \le \left( {1 - \rho } \right)
\end{align*}
for $\left| z \right| < {r_2}\left( {n,m,\rho,\mu } \right)$.Indeed, we find again from Theorem 2.1 that 
\begin{align}
\sum\limits_{k = j + 1}^\infty  {\frac{{k - \rho }}{{1 - \rho }}} \,{a_k}\,{\left| z \right|^{k - 1}} \le 1
\end{align} 
But, by Theorem 2.1, equation (2.16) will be true if 
\begin{align*}
\frac{k-\rho}{{1 - \rho }}{\left| z \right|^{k - 1}} \le {\left[ k \right]^{ - n}}\left[ {\left( {\beta  + 1} \right){{\left[ k \right]}^{ - m}} - \beta } \right]
\end{align*}
that is, if 
\begin{equation}
\left| z \right| \le {\left[ {\left( {\frac{{1 - \rho }}{k-\rho}} \right)\left\{ {{{\left[ k \right]}^{ - n}}\left[ {\left( {\beta  + 1} \right){{\left[ k \right]}^{ - m}} - \beta } \right]} \right\}} \right]^{\frac{1}{{k - 1}}}}
\end{equation}
Theorem 2.8 follows from equation (2.17).
\end{proof}
\begin{theorem}
Let the function $f(z)$ be defined by (1.3) be in the class $T(n,m,\beta)$
then $f(z)$ is convex of order $\rho (0\le \rho <1)$ in $\left|z\right|<r_3$, where\\ 
$ r_3 =r_3(n,m,\beta,\rho)$
\begin{align*}
= \mathop {\inf }\limits_k{\left[ {\left( {\frac{{1 - \rho }}{{k\left( {k - \rho } \right)}}} \right)\left\{ {{{\left[ k \right]}^{ - n}}\left[ {\left( {\beta  + 1} \right){{\left[ k \right]}^{ - m}} - \beta } \right]} \right\}} \right]^{\frac{1}{{k - 1}}}}
\end{align*}
 The result is sharp with the extremal function given by equation (2.3).
\end{theorem}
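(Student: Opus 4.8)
The plan is to follow verbatim the pattern of Theorems 2.7 and 2.8, replacing the close-to-convex / starlike criterion by the corresponding sufficient condition for convexity of order $\rho$. The starting observation is that, since
\[
{\mathop{\rm Re}\nolimits}\left\{ 1 + \frac{zf''(z)}{f'(z)} \right\} \ge 1 - \left| \frac{zf''(z)}{f'(z)} \right|,
\]
it is enough to show that $\left| zf''(z)/f'(z) \right| \le 1 - \rho$ for $|z| < r_3$; this at once gives ${\mathop{\rm Re}\nolimits}\{1 + zf''(z)/f'(z)\} \ge \rho$, i.e. convexity of order $\rho$.

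First I would differentiate (1.3) twice to get $f'(z) = 1 - \sum_{k=j+1}^\infty k\,a_k z^{k-1}$ and $zf''(z) = -\sum_{k=j+1}^\infty k(k-1)\,a_k z^{k-1}$, and then, using $a_k \ge 0$ together with the triangle inequality, estimate
\[
\left| \frac{zf''(z)}{f'(z)} \right| \le \frac{\sum_{k=j+1}^\infty k(k-1)\,a_k |z|^{k-1}}{1 - \sum_{k=j+1}^\infty k\,a_k |z|^{k-1}}.
\]
The next step is the key algebraic reduction: the right-hand side is $\le 1-\rho$ exactly when $\sum_{k=j+1}^\infty k(k-1)a_k|z|^{k-1} \le (1-\rho)\bigl(1 - \sum_{k=j+1}^\infty k\,a_k|z|^{k-1}\bigr)$, and collecting the two sums merges the factor $k(k-1) + (1-\rho)k$ into $k(k-\rho)$. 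Hence the convexity condition holds whenever
\[
\sum\limits_{k = j + 1}^\infty \frac{k(k - \rho)}{1 - \rho}\, a_k\, |z|^{k-1} \le 1.
\]

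Finally, just as in Theorems 2.7 and 2.8, I would invoke Theorem 2.1 to dominate this sum term by term: the inequality above is guaranteed once $\frac{k(k-\rho)}{1-\rho}|z|^{k-1} \le [k]^{-n}\bigl[(\beta+1)[k]^{-m} - \beta\bigr]$ for every $k \ge j+1$, which rearranges to $|z| \le \bigl[ \frac{1-\rho}{k(k-\rho)}\{[k]^{-n}[(\beta+1)[k]^{-m}-\beta]\}\bigr]^{1/(k-1)}$; taking the infimum over $k \ge j+1$ yields precisely $r_3$. Sharpness follows by testing the single-term extremal function (2.3), for which the infimum defining $r_3$ is attained at the corresponding index $k$ and the estimate becomes an equality. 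The only point requiring genuine care is the algebraic merging that produces the factor $k(k-\rho)$ (the analogue of $k$ in Theorem 2.7 and $k-\rho$ in Theorem 2.8); everything else is a direct adaptation of the two preceding proofs.
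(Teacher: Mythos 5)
Your proposal is correct and is exactly the argument the paper has in mind: the paper omits the proof of this theorem entirely, stating only that it is ``similar to'' the preceding radius theorems, and your derivation supplies those omitted details in the same pattern as Theorems 2.7 and 2.8 (reduce the convexity condition $|zf''(z)/f'(z)|\le 1-\rho$ to $\sum_{k}\frac{k(k-\rho)}{1-\rho}a_k|z|^{k-1}\le 1$, then dominate term by term using Theorem 2.1). The algebraic merging $k(k-1)+(1-\rho)k=k(k-\rho)$ is right, and the resulting bound on $|z|$ matches the stated $r_3$.
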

\begin{proof}
The proof of above theorem is similar to that of Theorem 2.9. Therefore we omit the details involved.
\end{proof}
\begin{theorem}
Let the function $f(z)$ be defined by (1.3) be in the class $T(n,m,\beta)$  and let $c$ be a real number such that $c>-1$.Then the function $G(z)$ defined by
\begin{equation}
G(z) = z - \int_0^z {{t^{c - 1}}\,\,f(t)\,dt\,,(c >  - 1)}
\end{equation}
also belongs to the class $T(n,m,j,\beta)$.
\end{theorem}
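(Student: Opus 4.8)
The plan is to reduce the claim entirely to the coefficient criterion of Theorem 2.1, exploiting the fact that an integral transform of this Bernardi--Libera--Livingston type only contracts the coefficients. First I would substitute $f(z)=z-\sum_{k=j+1}^{\infty}a_k z^{k}$ into the definition of $G$ and integrate term by term. Writing the operator in its normalized form, the computation
\[
\int_0^z t^{c-1}f(t)\,dt=\frac{z^{c+1}}{c+1}-\sum_{k=j+1}^{\infty}\frac{a_k}{c+k}\,z^{c+k}
\]
leads, after the normalization is carried through, to a power series
\[
G(z)=z-\sum_{k=j+1}^{\infty}b_k z^{k},\qquad b_k=\frac{c+1}{c+k}\,a_k .
\]
Since $c>-1$ and $k\ge j+1\ge 2$, both $c+1>0$ and $c+k>0$, so each $b_k\ge 0$ and $G$ is of the admissible form (1.3).

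The decisive observation is the contraction $b_k\le a_k$: because $c+k\ge c+1$ for every $k\ge 1$, we have $0<\frac{c+1}{c+k}\le 1$, whence $0\le b_k\le a_k$ for all $k\ge j+1$. The weights $[k]^{-n}\bigl[(\beta+1)[k]^{-m}-\beta\bigr]$ that appear in the Theorem 2.1 criterion are positive on the class (this is exactly what underlies Corollary 2.2), so replacing each $a_k$ by the smaller $b_k$ can only decrease the left-hand side of (2.1).

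Combining these, I would apply the equivalence of Theorem 2.1 in both directions. Since $f\in T(n,m,\beta)$, its coefficients satisfy $\sum_{k=j+1}^{\infty}[k]^{-n}\bigl[(\beta+1)[k]^{-m}-\beta\bigr]a_k\le 1$, and by the monotonicity just noted
\[
\sum_{k=j+1}^{\infty}[k]^{-n}\bigl[(\beta+1)[k]^{-m}-\beta\bigr]b_k
\le\sum_{k=j+1}^{\infty}[k]^{-n}\bigl[(\beta+1)[k]^{-m}-\beta\bigr]a_k\le 1,
\]
which is precisely the criterion of Theorem 2.1 for $G$; hence $G\in T(n,m,\beta)$. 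The only step demanding care is the term-by-term integration together with the correct normalization of the operator, so that the constant and $z^{c+k}$ powers collapse back to a function of the form (1.3); once the coefficients $b_k=\frac{c+1}{c+k}a_k$ are in hand, the argument is nothing more than the monotonicity of the Theorem 2.1 sum in its coefficients and presents no genuine obstacle.
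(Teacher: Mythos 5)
Your proposal is correct and follows essentially the same route as the paper: compute the transformed coefficients $b_k=\frac{c+1}{c+k}\,a_k$, observe that $0\le b_k\le a_k$ since $c>-1$, and conclude via the monotonicity of the coefficient sum in Theorem 2.1. Your remark about the needed normalization of the integral (the printed formula for $G$ omits the Bernardi factor $\frac{c+1}{z^{c}}$, without which the stated $b_k$ would not result) is a fair and worthwhile observation, but the substance of the argument is identical to the paper's.
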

\begin{proof}
From the equation (2.18), it follows that
$G(z) = z - \sum\limits_{k = j + 1}^\infty  {{b_k}\,{z^k}}$ \ where \ 
${b_k} = \left( {\frac{{c + 1}}{{c + k}}} \right)\,{a_k}$.\\
Therefore, we have \\
\begin{align*}
\sum\limits_{k = j + 1}^\infty  {{{\left[ k \right]}^{ - n}}\left[ {\left( {\beta  + 1} \right){{\left[ k \right]}^{ - m}} - \beta } \right]{b_k} \le } \sum\limits_{k = j + 1}^\infty  {{{\left[ k \right]}^{ - n}}\left[ {\left( {\beta  + 1} \right){{\left[ k \right]}^{ - m}} - \beta } \right]{a_k} \le } 1.
\end{align*}
Since $ f(z) \in T(n,m,\beta)$.\\
Hence, by Theorem 2.1, $ G(z) \in T(n,m,\beta)$.
\begin{theorem}
Let the function $f(z)$ be defined by (1.3) be in the class $T(n,m,\beta)$ and $c$ be the real number such that $c>-1$. Then function $G(z)$ given by (2.18) is univalent in $\left|z\right|<P^*$ where
\begin{align}
{P^ * } = \mathop {\inf }\limits_k {\left[ {\frac{{\left( {c + 1} \right){{\left[ k \right]}^{ - n}}\left[ {\left( {\beta  + 1} \right){{\left[ k \right]}^{ - m}} - \beta } \right]}}{{\left( {c + k} \right)}}} \right]^{\frac{1}{{k - 1}}}},\,\,\quad (k \ge j + 1).
\end{align}
The result is sharp.
\end{theorem}
\begin{proof}
From the equation (2.18), we have 
\begin{align*}
f(z) = \frac{{{z^{1 - c}}{{\left[ {{z^c}G(z)} \right]}^{'}}}}{{c + 1}} = z - \sum\limits_{k = j + 1}^\infty  {\frac{{c + k}}{{c + 1}}} \,{a_k}\,{z^k}.
\end{align*}
In order to obtain required result, it suffices to show that
$\left| {{G^{'}}(z) - 1} \right| < 1$, whenever $\left| z \right| < {P^ * }$, where
$P^*$ is given by the equation (2.19).\\
Now,
\begin{align*}
\left| {{G^{'}}(z) - 1} \right| \le \sum\limits_{k = j + 1}^\infty  {\frac{{k\left( {c + k} \right)}}{{c + 1}}} \,{a_k}{\left| z \right|^{k - 1}}.
\end{align*}
Thus, 
\begin{equation}
\left| {{G^{'}}(z) - 1} \right| <1 \  \ if \    \  \sum\limits_{k = j + 1}^\infty  {\frac{{k\left( {c + k} \right)}}{{c + 1}}} \,{a_k}{\left| z \right|^{k - 1}} \le 1.
\end{equation}
But Theorem 2.1 confirms that
\begin{align}
\sum\limits_{k = j + 1}^\infty  {{{\left[ k \right]}^{ - n}}\left[ {\left( {\beta  + 1} \right){{\left[ k \right]}^{ - m}} - \beta } \right] \le } 1.
\end{align}
Thus,
\begin{align*}
\frac{{k(c + k)}}{{c + 1}}{\left| z \right|^{k - 1}} < {\left[ k \right]^{ - n}}\left[ {\left( {\beta  + 1} \right){{\left[ k \right]}^{ - m}} - \beta } \right].
\end{align*}
That is, if
\begin{align}
\left| z \right| < {\left[ {\frac{{c + 1}}{{k(c + k)}}{{\left[ k \right]}^{ - n}}\left[ {\left( {\beta  + 1} \right){{\left[ k \right]}^{ - m}} - \beta } \right]} \right]^{\frac{1}{{k - 1}}}}.
\end{align}
Therefore the function given by (2.18) is univalent in $\left| z \right| < {P^ * }$.\\
Let the function ${f_v}(z)\,,\left( {v = 1,2} \right)$ be defined by (2.4). The modified Hadamard product of  ${f_1}(z)$ and ${f_2}(z)$ is defined by
\begin{equation}
\left( {{f_1} * {f_2}} \right)(z) = z - \sum\limits_{k = j + 1}^\infty  {{a_{k,1}}} \  {a_{k,2}}\ {z^k}.
\end{equation}
\end{proof}
\begin{theorem}
Let each of the function ${f_v}(z),\,\,\left( {v = 1,2} \right)$ defined by (2.4) 
be in the class $T(n,m,\beta)$.Then $ {{f_1} * {f_2}} (z) \in  T(n,m,\beta)$ where
\begin{equation}
\gamma  = \frac{{{{\left[ {j + 1} \right]}^{ - n}}{{\left[ {\left( {\beta  + 1} \right){{\left[ {j + 1} \right]}^{ - m}} - \beta } \right]}^2} - {{\left[ {j + 1} \right]}^{ - m}}}}{{{{\left[ {j + 1} \right]}^{ - m}} - 1}}.
\end{equation}
The result is sharp.
\end{theorem}
\begin{proof}
Employing the techniques used by Schild and Silverman \cite{B9}, we need to find largest
$\gamma=\gamma(n,m,\beta)$ such that
\begin{align*}
\sum\limits_{k = j + 1}^\infty  {{{\left[ k \right]}^{ - n}}\left[ {\left( {\beta  + 1} \right){{\left[ k \right]}^{ - m}} - \beta } \right]{a_{k,1}}{a_{k,2}} \le 1} .
\end{align*}
Since
\begin{align*}
\sum\limits_{k = j + 1}^\infty  {{{\left[ k \right]}^{ - n}}\left[ {\left( {\beta  + 1} \right){{\left[ k \right]}^{ - m}} - \beta } \right]{a_{k,1}} \le 1} 
\end{align*}
and 
\begin{align*}
\sum\limits_{k = j + 1}^\infty  {{{\left[ k \right]}^{ - n}}\left[ {\left( {\beta  + 1} \right){{\left[ k \right]}^{ - m}} - \beta } \right]{a_{k,2}} \le 1} .
\end{align*}
By the Cauchy-Schwarz inequality, we have
\begin{align*}
\sum\limits_{k = j + 1}^\infty  {{{\left[ k \right]}^{ - n}}\left[ {\left( {\beta  + 1} \right){{\left[ k \right]}^{ - m}} - \beta } \right]\sqrt {{a_{k,1}}{a_{k,2}}}  \le 1} .
\end{align*}
and thus it is sufficient to show that 
\begin{align*}
{\left[ k \right]^{ - n}}\left[ {\left( {\beta  + 1} \right){{\left[ k \right]}^{ - m}} - \beta } \right]{a_{k,1}}{a_{k,2}} \le {\left[ k \right]^{ - n}}\left[ {\left( {\beta  + 1} \right){{\left[ k \right]}^{ - m}} - \beta } \right]\sqrt {{a_{k,1}}{a_{k,2}}} 
\end{align*}
That is, 
\begin{align*}
\sqrt {{a_{k,1}}{a_{k,2}}}  \le \frac{{\left[ {\left( {\beta  + 1} \right){{\left[ k \right]}^{ - m}} - \beta } \right]}}{{\left[ {\left( {\gamma  + 1} \right){{\left[ k \right]}^{ - m}} - \gamma } \right]}}.
\end{align*}
Note that, 
\begin{align*}
\sqrt {{a_{k,1}}{a_{k,2}}}  \le \frac{1}{{{{\left[ k \right]}^{ - n}}\left[ {\left( {\beta  + 1} \right){{\left[ k \right]}^{ - m}} - \beta } \right]}}.
\end{align*}
Consequently, we need only to prove that
\begin{align*}
\frac{1}{{{{\left[ k \right]}^{ - n}}\left[ {\left( {\beta  + 1} \right){{\left[ k \right]}^{ - m}} - \beta } \right]}} \le \frac{{\left[ {\left( {\beta  + 1} \right){{\left[ k \right]}^{ - m}} - \beta } \right]}}{{\left[ {\left( {\gamma  + 1} \right){{\left[ k \right]}^{ - m}} - \gamma } \right]}}
\end{align*}
Or, equivalently that 
\begin{align*}
\gamma \left[ {{{\left[ k \right]}^{ - n}} - 1} \right] + {\left[ k \right]^{ - m}} \le {\left[ k \right]^{ - n}}{\left[ {\left( {\beta  + 1} \right){{\left[ k \right]}^{ - m}} - \beta } \right]^2}
\end{align*}
\begin{align}
\gamma  = \frac{{{{\left[ k \right]}^{ - n}}{{\left[ {\left( {\beta  + 1} \right){{\left[ k \right]}^{ - m}} - \beta } \right]}^2} - {{\left[ k \right]}^{ - m}}}}{{{{\left[ k \right]}^{ - m}} - 1}}.
\end{align}
Since right hand side of the equation (2.25) is an increasing function of $k$, letting $k=j+1$ in the equation (2.25), we have
\begin{align*}
\gamma  = \frac{{{{\left[ {j + 1} \right]}^{ - n}}{{\left[ {\left( {\beta  + 1} \right){{\left[ {j + 1} \right]}^{ - m}} - \beta } \right]}^2} - {{\left[ {j + 1} \right]}^{ - m}}}}{{{{\left[ {j + 1} \right]}^{ - m}} - 1}}.
\end{align*}
which proves the main assertion of Theorem 2.12. Finally, by taking the function
\begin{equation}
{f_v}(z) = z - \frac{1}{{{{\left[ {j + 1} \right]}^{ - n}}\left[ {\left( {\beta  + 1} \right){{\left[ {j + 1} \right]}^{ - m}} - \beta } \right]}}{z^{j + 1}}
\end{equation}
we can see that result is sharp.
\end{proof}
\begin{theorem}
Let $f_1 (z) \in T(n,m,\beta)$ and $f_2 (z) \in T(n,m,\eta)$.
Then ${f_1} * {f_2}\,(z) \in T(n,m,\xi )$ where\\
 $\xi =\xi (n,m,\eta)$
\begin{align}
= \frac{{{{\left[ {j + 1} \right]}^{ - n}}\left[ {\left( {\beta  + 1} \right){{\left[ {j + 1} \right]}^{ - m}} - \beta } \right]{{\left[ {j + 1} \right]}^{ - n}}\left[ {\left( {\eta  + 1} \right){{\left[ {j + 1} \right]}^{ - m}} - \eta } \right] - {{\left[ {j + 1} \right]}^{ - n}}}}{{{{\left[ {j + 1} \right]}^{ - n}} - 1}}.
\end{align}
The result is best possible for the function 
\begin{align*}
{f_1}(z) = z - \frac{1}{{{{\left[ {j + 1} \right]}^{ - n}}\left[ {\left( {\beta  + 1} \right){{\left[ {j + 1} \right]}^{ - m}} - \beta } \right]}}{z^{j + 1}}
\end{align*}
and
\begin{align*}
{f_2}(z) = z - \frac{1}{{{{\left[ {j + 1} \right]}^{ - n}}\left[ {\left( {\eta  + 1} \right){{\left[ {j + 1} \right]}^{ - m}} - \eta } \right]}}{z^{j + 1}}.
\end{align*}.
\end{theorem}
\begin{proof}
Proceeding as in the proof of Theorem 2.12, we obtain
\begin{align}
\xi  \le \frac{{{{\left[ k \right]}^{ - n}}\left[ {\left( {\beta  + 1} \right){{\left[ k \right]}^{ - m}} - \beta } \right]\left[ {\left( {\eta  + 1} \right){{\left[ k \right]}^{ - m}} - \eta } \right] - {{\left[ k \right]}^{ - m}}}}{{{{\left[ k \right]}^{ - m}} - 1}}.
\end{align}
Since the right hand side of the equation (2.28) is an increasing function of $k$, setting $k=2$ in (2.28), we obtain (2.27).\\
This completes the proof of Theorem 2.13.
\end{proof}
\begin{corollary}
Let the function $f_u (z)$ defined by 
\begin{align}
{f_v}(z) = z - \sum\limits_{k = j + 1}^\infty  {{a_{k,u}}\,{z^k}\,,\,\,({a_{k,v}} \ge 0\,,\,v = 1,2,3)} 
\end{align}
be in the class $T(n,m,\beta)$ and
$\left( {{f_1} * {f_2} * {f_3}} \right)\,(z) \in T(n,m,\delta)$,\\
where
\begin{align}
\delta  = \frac{{{{\left[ {j + 1} \right]}^{ - 2n}}{{\left[ {\left( {\beta  + 1} \right){{\left[ {j + 1} \right]}^{ - m}} - \beta } \right]}^2} - {{\left[ {j + 1} \right]}^{ - m}}}}{{{{\left[ {j + 1} \right]}^{ - m}} - 1}}.
\end{align}
The result is best possible for the functions
\begin{align*}
{f_v}(z) = z - \frac{1}{{{{\left[ {j + 1} \right]}^{ - n}}\left[ {\left( {\beta  + 1} \right){{\left[ {j + 1} \right]}^{ - m}} - \beta } \right]}}{z^{j + 1}}.
\end{align*}
\end{corollary}
\begin{proof}
From Theorem 2.13, we have $(v=1,2,3)$, $\left( {{f_1} * {f_2}} \right)\,(z) \in T(n,m,\gamma)$ where $\gamma$ is given by (2.24).
Now, using Theorem 2.14, we get \\
$\left( {{f_1} * {f_2} * {f_3}} \right)\,(z) \in T(n,m,\delta)$ where $\delta$ is given by (2.30).\\
This completes the proof of corollary.
\end{proof}
\begin{theorem}
Let the function $f_v (z)$ (v=1,2) defined by (2.4) be in the class  $T(n,m,\delta)$, then the function
\begin{align}
g(z) = z - \sum\limits_{k = j + 1}^\infty  {\left( {a_{k,1}^2 + a_{k,2}^2} \right)} \,{z^k}
\end{align}
belongs to the class $T(n,m,\alpha)$ where 
\begin{align}
\alpha  = \alpha (n,m,\alpha) = \frac{{{{\left[ {j + 1} \right]}^{ - n}}{{\left[ {\left( {\beta  + 1} \right){{\left[ {j + 1} \right]}^{ - m}} - \beta } \right]}^2}{{\left[ {j + 1} \right]}^{ - n}} - 2{{\left[ {j + 1} \right]}^{ - m}}}}{{2{{\left[ {j + 1} \right]}^{ - m}} - 1}}.
\end{align}
The result is sharp for the function defined by (2.26).
\end{theorem}
\begin{proof}
By virtue of Theorem 2.1, we have

\begin{align}
\sum\limits_{k = j + 1}^\infty  {{{\left[ k \right]}^{ - n}}\left[ {\left( {\beta  + 1} \right){{\left[ k \right]}^{ - m}} - \beta } \right]} \,{a^2}_{k,1} \le {\left[ {\sum\limits_{k = j + 1}^\infty  {{{\left[ k \right]}^{ - n}}\left[ {\left( {\beta  + 1} \right){{\left[ k \right]}^{ - m}} - \beta } \right]} \,{a_{k,1}}} \right]^2} \le 1
\end{align}
and

\begin{equation}
\sum\limits_{k = j + 1}^\infty  {{{\left[ k \right]}^{ - n}}\left[ {\left( {\beta  + 1} \right){{\left[ k \right]}^{ - m}} - \beta } \right]} \,{a^2}_{k,2} \le {\left[ {\sum\limits_{k = j + 1}^\infty  {{{\left[ k \right]}^{ - n}}\left[ {\left( {\beta  + 1} \right){{\left[ k \right]}^{ - m}} - \beta } \right]} \,{a_{k,2}}} \right]^2} \le 1.
\end{equation}
It follows from (2.33) and (2.34) that 
\begin{align}
{\left[ {\sum\limits_{k = j + 1}^\infty  {\frac{1}{2}{{\left[ k \right]}^{ - n}}\left[ {\left( {\beta  + 1} \right){{\left[ k \right]}^{ - m}} - \beta } \right]} \,} \right]^2}\left( {{a^2}_{k,1} + {a^2}_{k,2}} \right) \le 1.  
\end{align}
Therefore, we need to find the largest $\alpha$ such that 
\begin{align*}
{\left[ k \right]^{ - n}}\left\{ {\left( {\alpha  + 1} \right){{\left[ k \right]}^{ - m}} - \alpha } \right\} \le \frac{1}{2}{\left[ {{{\left[ k \right]}^{ - n}}\left\{ {\left( {\beta  + 1} \right){{\left[ k \right]}^{ - m}} - \beta } \right\}} \right]^2}.
\end{align*}
That is, 
\begin{align}
\alpha  \le \frac{{{{\left[ {j + 1} \right]}^{ - m}}{{\left[ {\left( {\beta  + 1} \right){{\left[ {j + 1} \right]}^{ - m}} - \beta } \right]}^2} - 2{{\left[ {j + 1} \right]}^{ - m}}}}{{2\left[ {{{\left[ {j + 1} \right]}^{ - m}} - 1} \right]}}.
\end{align}
Since right hand side of (2.36) is an increasing function of $k$, we readily have
 \begin{align*}
\alpha  = \frac{{{{\left[ {j + 1} \right]}^{ - m}}{{\left[ {\left( {\beta  + 1} \right){{\left[ {j + 1} \right]}^{ - m}} - \beta } \right]}^2} - 2{{\left[ {j + 1} \right]}^{ - m}}}}{{2\left[ {{{\left[ {j + 1} \right]}^{ - m}} - 1} \right]}}.
\end{align*}
Hence proof of Theorem 2.14 is complete.
\end{proof}
\end{proof}\\
\\
\\
\textbf { Author Declaration}:\\
We wish to confirm that here are no known conflicts of interest associated with this publication and there has been no significant financial support for this work.

\pagebreak


\begin{thebibliography}{9}
\bibitem{B1}
Salagean G. S.(1983), Subclass of univalent functions, Lecture notes in Math.,
 Springer-Verlag, 1013, 362-372.
\bibitem{B2}
Goodman,A.W.(1991), On uniformly convex functions,Ann.Polon.Math.56,87-92.
\bibitem{B3}
Goodman,A.W.(1991), On uniformly starlik functions,J.Math.Anal.Appl.155,355-370.
\bibitem{B4}
Murugusundaramoorthy,G.(1994), Studies on classes of analytic functions with negative coefficients,Ph.D.Thesis,University of Madras,India.
\bibitem{B5}
Pathak,A.L.(2004), A Study of Univalent and Related functions,Ph.D.Thesis, C.S.J.M.University,Kanpur,India.
\bibitem{B6}
Robertson, M.S.(1936), On the theory of univalent functions,Annals Math.,37,374-408.
\bibitem{B7} 
Ronning,F.(1993),Uniformly convex functions and corresponding class of starlike functions,Proc.Amer.Math.Soc.118(1),189-196.
\bibitem{B8}
Dixit,K.K.,Ghai,S.K.,Porwal,S.(2013), On a subclass of analytic functions with negative coefficients defined by generalised salagean operator,J.R.A.P.S.,Vol.12,No.2, 151-166.
\bibitem{B9}
Schild,A.and Silverman,H.(1975), Covolution of analytic functions with negative coefficients,Ann.Univ.Marie-Curie-Sklodowska 29,99-107.
\bibitem{B10}
Silverman, H. (1998), Harmonic univalent function with negative coefficients, J. Math. Anal. Appl., 220, 283-289.
\bibitem{B11}
Joshi, S. B. and Sangle, N. D. (2007), Meromorphic  starlike functions  with negative and missing coefficients, Far East J. Math. Sci. (FMJS), 26(2), 289–301.
\bibitem{B12}
Sangle N. D. and  Birajdar G. M.(2020), Certain subclass of analytic function with negative coefficients defined by Catas operator, Indian Journal of Mathematics (IJM), 62(3), 335-353.

\end{thebibliography}
\end{document}